\DeclareSymbolFont{cyrletters}{OT2}{wncyr}{m}{n}
\DeclareMathSymbol{\Sha}{\mathalpha}{cyrletters}{"58}
\title[On Euler's formulae among double zeta values]
{On Euler's formulae for double zeta values \\
}
\author{Ryotaro Harada}
\address{Graduate School of Mathematics, Nagoya University, 
Furo-cho, Chikusa-ku, Nagoya 464-8602 Japan }
\email{m15039r@math.nagoya-u.ac.jp}
\date{October 26, 2016.}
\newtheorem{thm}{Theorem}
\newtheorem{prop}[thm]{Proposition}  
\theoremstyle{remark}
\theoremstyle{definition}
\newtheorem{rem}[thm]{Remark}
\begin{document}
\bibliographystyle{amsalpha+}
\maketitle

\begin{abstract}
In 1776, L. Euler proposed three methods, called {\it prima methodus}, {\it secunda methodus} and {\it tertia methodus}, to calculate formulae for double zeta values. However strictly speaking, his last two methods are mathematically incomplete and require more precise reformulation and more sophisticated arguments for their justification. In this paper, we reformulate his formulae, give their rigorous proofs and also clarify that the formulae can be derived from the extended double shuffle relations. 
\end{abstract}


\section{Euler's methods}
In 1776, L. Euler published a celebrated paper {\it Meditations circa singulare serierum genus}\footnote{Downloadable from {\tt http://eulerarchive.maa.org}.} written in Latin which means 'Meditations about a singular type of series' in English. It is said to be the first publication in history where multiple zeta values (actually only double zeta values) were introduced.
In the paper he proposed three methods to calculate certain relations among double zeta values, which he called {\it prima methodus}, {\it secunda methodus} and {\it tertia methodus}. Here we explain his methods with his idea and point out the steps that would be considered insufficient. 
\subsection{Prima methodus}
In the paper, he studied the series 
$
1+\frac{1}{2^m}\bigl( 1+\frac{1}{2^n} \bigr)+\frac{1}{3^m}\bigl( 1+\frac{1}{2^n}+\frac{1}{3^n} \bigr)+\frac{1}{4^m}\bigl( 1+\frac{1}{2^n}+\frac{1}{3^n}+\frac{1}{4^n} \bigr)+\cdots
$
which he denoted by the unconventional notation
$\mathit{
  \int\frac{1}{z^m}\Bigl( \frac{1}{y^n} \Bigr)
}$
and also the series 
$
  1+\frac{1}{2^m}+\frac{1}{3^m}+\frac{1}{4^m}+\cdots
$
which he again denoted by the notation
$\mathit{
  \int\frac{1}{z^m}.
}$
In modern language, they are nothing but the {\it double zeta star value}
\[
\zeta^\star(n,m):=\sum_{0<k_1\leq k_2}\frac{1}{{k_1}^{n}{k_2}^{m}}=\zeta(n,m)+\zeta(m+n) 
\]
 for $n\in\mathbb{Z}_{>0}$ and $m\in\mathbb{Z}_{>1}$, and the {\it Riemann zeta value}
$
\zeta(m):=\sum_{k=1}^{\infty}\frac{1}{k^m}
$ 
for $m\in\mathbb{Z}_{>1}$ respectively. 
Here we recall the {\it double zeta value} 
  \[
      \zeta(n,m) := \sum_{0<k_1<k_2}{\frac{1} { {k_1}^{n} {k_2}^{m} } }
  \]
for $n\in\mathbb{Z}_{>0}$ and $m\in\mathbb{Z}_{>1}$. 
By multiplying these series, he obtained the {\bf formula of prima methodus} (cf. \cite{Eu} p.144)
\[\mathit{
\int\frac{1}{z^m}\biggl( \frac{1}{y^n} \biggr)+\int\frac{1}{z^n}\biggl( \frac{1}{y^m} \biggr)=\int\frac{1}{z^m}\cdot\int\frac{1}{z^n}+\int\frac{1}{z^{m+n}}.
}\]
In modern language, this says
\begin{equation}
   \label{pristar}  
\zeta^{\star}(n,m)+\zeta^{\star}(m,n)=\zeta(n)\zeta(m)+\zeta(m+n).
\end{equation}
It is nothing but the {\it harmonic product formula},
\begin{align}
  \label{pri}
  \zeta(m,n)+\zeta(n,m)+\zeta(m+n)=\zeta(m)\zeta(n),
\end{align}
which is known to hold for $m,n\in\mathbb{Z}_{>1}$.

\subsection{Secunda methodus}
This subsection summarizes \cite{Eu} pp.144--149. 

Firstly, Euler began with a partial fraction decomposition (cf. \cite{Eu} pp.145--146) 
\begin{align*}
  \mathit{\frac{1}{x^{n}(x+a)^m}= }&\mathit{\frac{1}{a^n}\cdot\frac{1}{x^m}-\frac{n}{1\cdot a^{n+1}}\cdot\frac{1}{x^{m-1}}+\frac{n(n+1)}{1\cdot2a^{n+2}}\cdot\frac{1}{x^{m-2}} }\\
  &\mathit{-\frac{n(n+1)(n+2)}{1\cdot2\cdot3a^{n+3}}\cdot\frac{1}{x^{m-3}}+{\rm etc.} }\\
  &\mathit{\pm\frac{1}{a^m}\cdot\frac{1}{(x+a)^n}\pm\frac{m}{1\cdot a^{m+1}}\cdot\frac{1}{(x+a)^{n-1}}\pm\frac{m(m+1)}{1\cdot 2a^{m+2}}\cdot\frac{1}{(x+a)^{n-2}} }\\
  &\mathit{\pm\frac{m(m+1)(m+2)}{1\cdot 2\cdot 3a^{m+3}}\cdot\frac{1}{(x+a)^{n-3}}\pm{\rm etc.} }
\end{align*}
In modern language it reads
\begin{align}
\label{fra}  
\frac{1}{x^n(x+a)^m} = &\sum^{m-1}_{i=0}(-1)^i\binom{n+i-1}{i}\frac{1}{a^{n+i} }\frac{1}{x^{m-i} }
                                +(-1)^m\sum^{n-1}_{j=0}\binom{m+j-1}{j}\frac{1}{a^{m+j}}\frac{1}{(x+a)^{n-j}}. 
\end{align} \par
Secondly, he put\footnote{We note that in \cite{Eu}, it is simply denoted by $s$, which could give rise to confusion in this text.} 
$
  	  s_a:=\sum^{\infty}_{x=1}\frac{1}{x^n(x+a)^m}.
	$
and calculated as follows (cf. \cite{Eu} pp.146--147):
\begin{align*}
  s_a&=\mathit{\frac{1}{a^n}\int\frac{1}{z^m}-\frac{n}{1\cdot a^{n+1}}\int\frac{1}{z^{m-1}}+\frac{n(n+1)}{1\cdot2a^{n+2}}\int\frac{1}{z^{m-2}} }
  \mathit{\ -\frac{n(n+1)(n+2)}{1\cdot2\cdot3a^{n+3}}\int\frac{1}{z^{m-3}}+{\rm etc.} }\\
  &\mathit{\ \pm\frac{1}{a^m}\int\frac{1}{z^n}\pm\frac{m}{1\cdot a^{m+1}}\int\frac{1}{z^{n-1}}\pm\frac{m(m+1)}{1\cdot2a^{m+2}}\int\frac{1}{z^{n-2}} }
  \mathit{\ \pm\frac{m(m+1)(m+2)}{1\cdot2\cdot3 a^{m+3}}\int\frac{1}{z^{n-3}}\pm{\rm etc.} }\\
  &\mathit{\ \mp\frac{1}{a^m}\Bigl( 1+\frac{1}{2^n}+\frac{1}{3^n}+}\cdots\mathit{+\frac{1}{a^n} \Bigr) }
  \mathit{\ \mp\frac{m}{1\cdot a^{m+1}}\Bigl( 1+\frac{1}{2^{n-1}}+\frac{1}{3^{n-1}}+}\cdots\mathit{+\frac{1}{a^{n-1}} \Bigr) }\\
  &\mathit{\ \mp\frac{m(m+1)}{1\cdot2a^{m+2}}\Bigl( 1+\frac{1}{2^{n-2}}+\frac{1}{3^{n-2}}+}\cdots\mathit{+\frac{1}{a^{n-2}} \Bigr) }
  \mathit{\ \mp\frac{m(m+1)(m+2)}{1\cdot2\cdot3a^{m+3}}\Bigl( 1+\frac{1}{2^{n-3}}+\frac{1}{3^{n-3}}+}\cdots\mathit{+\frac{1}{a^{n-3}} \Bigr) }\\
  &\mathit{ \ \mp{\rm etc.} }
\end{align*}
In modern language, this is written as 
\begin{align*}
          s_a=\sum^{\infty}_{x=1}\frac{1}{x^n(x+a)^m}
                                =&\sum^{m-1}_{i=0}\sum^{\infty}_{x=1}(-1)^i\binom{n+i-1}{i}\frac{1}{a^{n+i} }\frac{1}{x^{m-i} }\\
                                &+(-1)^m\sum^{n-1}_{j=0}\sum^{\infty}_{x=1}\binom{m+j-1}{j}\frac{1}{a^{m+j} }\frac{1}{(x+a)^{n-j} }.    
\end{align*}  
\begin{rem}
By using \eqref{fra}, he came to the above equation in the following way:
\begin{align*}
&\sum^{\infty}_{x=1}\frac{1}{x^n(x+a)^m}\\
&=\sum^{\infty}_{x=1}\biggl\{ \sum^{m-1}_{i=0}(-1)^i\binom{n+i-1}{i}\frac{1}{a^{n+i}}\frac{1}{x^{m-i} }
                                +(-1)^m\sum^{n-1}_{j=0}\binom{m+j-1}{j}\frac{1}{a^{m+j}}\frac{1}{(x+a)^{n-j} }  \biggr\}\\
                               &\overset{!}{=}\sum^{m-1}_{i=0}\sum^{\infty}_{x=1}(-1)^i\binom{n+i-1}{i}\frac{1}{a^{n+i} }\frac{1}{x^{m-i} }
                                +(-1)^m\sum^{n-1}_{j=0}\sum^{\infty}_{x=1}\binom{m+j-1}{j}\frac{1}{a^{m+j} }\frac{1}{(x+a)^{n-j} }.   
\end{align*}  
 Here we alert reader to the fact that validity of the above equation $\overset{!}{=}$ is really problematic. We can not exchange two summations because the right hand side of this equation does not converge absolutely. 
\end{rem}
 Thirdly, he considered $\sum^{\infty}_{a=1}s_a$ and calculated as follows (cf. \cite{Eu} pp.147--148): 
\begin{align*}
 \sum^{\infty}_{a=1}s_a=&\mathit{ \int\frac{1}{z^n}\cdot\int\frac{1}{z^m}-\frac{n}{1}\int\frac{1}{z^{n+1} }\cdot\int\frac{1}{z^{m-1} }+\frac{n(n+1)}{1\cdot2}\int\frac{1}{z^{n+2} }\cdot\int\frac{1}{z^{m-2}} }\\
  &\mathit{\ -\frac{n(n+1)(n+2)}{1\cdot2\cdot3}\int\frac{1}{z^{n+3}}\cdot\int\frac{1}{z^{m-3} }+{\rm etc.}}\\
  &\mathit{\ \pm\int\frac{1}{z^m}\cdot\int\frac{1}{z^n}\mp\int\frac{1}{z^m}\Bigl(\frac{1}{y^n}\Bigr)}
  \mathit{\ \pm\frac{m}{1}\int\frac{1}{z^{m+1}}\cdot\int\frac{1}{z^{n-1}}\mp\frac{m}{1}\int\frac{1}{z^{m+1}}\Bigl(\frac{1}{y^{n-1}}\Bigr)}\\
  &\mathit{\ \pm\frac{m(m+1)}{1\cdot2}\int\frac{1}{z^{m+2}}\cdot\int\frac{1}{z^{n-2} }\mp\frac{m(m+1)}{1\cdot2}\int\frac{1}{z^{m+2} }\Bigl(\frac{1}{y^{n-2} }\Bigr)}\\
  &\mathit{\ \pm\frac{m(m+1)(m+2)}{1\cdot2\cdot3}\int\frac{1}{z^{m+3}}\cdot\int\frac{1}{z^{n-3}} }
\mathit{ \mp\frac{m(m+1)(m+2)}{1\cdot2\cdot3}\int\frac{1}{z^{m+3}}\Bigl(\frac{1}{y^{n-3}}\Bigr) }\\
  &\mathit{\ \pm{\rm etc.} }
\end{align*}  
In modern language, it means
\begin{align}
	\label{key} \zeta(m,n)= &\sum^{\infty}_{a=1}s_a=\sum^{\infty}_{a=1}\sum^{m-1}_{i=0}(-1)^i\binom{n+i-1}{i}\frac{1}{a^{n+i}}\zeta(m-i)\\
       &+(-1)^{m}\sum^{\infty}_{a=1}\sum^{n-1}_{j=0}\binom{m+j-1}{j}\frac{1}{a^{m+j}}\Bigl\{\zeta(n-j)-\sum^{a}_{k=1}\frac{1}{k^{n-j}}\Bigl\}\nonumber\\
       =&\sum^{m-1}_{i=0}(-1)^i\binom{n+i-1}{i}\zeta(n+i)\zeta(m-i)\nonumber\\
					    &+(-1)^m\sum^{n-1}_{j=0}\binom{m+j-1}{j}\Bigl\{ \zeta(m+j)\zeta(n-j)-\zeta^{\star}(n-j, m+j)\Bigr\}. \nonumber
\end{align}
The above is a {\bf key formula} of secunda methodus and tertia methodus. 
Finally he substituted the formula \eqref{key} into \eqref{pri} and obtained the {\bf formula of secunda methodus} (cf. \cite{Eu} pp.148--149).
\begin{align*}
&\mathit{\int\frac{1}{z^m}\cdot\int\frac{1}{z^n}-\int\frac{1}{z^{m+n}}}\\
&\mathit{\ =(1\pm1)\int\frac{1}{z^m}\cdot\int\frac{1}{z^n}\mp\int\frac{1}{z^m}\Bigl( \frac{1}{y^n} \Bigr)}\\
&\mathit{\ \ -\frac{m}{1}(1\mp1)\int\frac{1}{z^{m+1}}\cdot\int\frac{1}{z^{n-1}}\mp\frac{m}{1}\int\frac{1}{z^{m+1}}\Bigl(\frac{1}{y^{n-1}}\Bigr)}\\
&\mathit{\ \ +\frac{m(m+1)}{1\cdot2}(1\pm1)\int\frac{1}{z^{m+2}}\cdot\int\frac{1}{z^{n-2}}\mp\frac{m(m+1)}{1\cdot2}\int\frac{1}{z^{m+2}}\Bigl(\frac{1}{y^{n-2}}\Bigr)}\\
&\mathit{\ \ -\frac{m(m+1)(m+2)}{1\cdot2\cdot3}(1\mp1)\int\frac{1}{z^{m+3}}\cdot\int\frac{1}{z^{n-3}}\mp\frac{m(m+1)(m+2)}{1\cdot2\cdot3}\int\frac{1}{z^{m+3}}\Bigl(\frac{1}{y^{n-3}}\Bigr)}\\
&\mathit{\ \ \pm{\rm etc.}}\\
&\mathit{\ \ +(1\pm1)\int\frac{1}{z^n}\cdot\int\frac{1}{z^m}\mp\int\frac{1}{z^n}\Bigl( \frac{1}{y^m} \Bigr)}\\
&\mathit{\ \ -\frac{n}{1}(1\mp1)\int\frac{1}{z^{n+1}}\cdot\int\frac{1}{z^{m-1}}\mp\frac{n}{1}\int\frac{1}{z^{n+1}}\Bigl(\frac{1}{y^{m-1}}\Bigr)}\\
&\mathit{\ \ +\frac{n(n+1)}{1\cdot2}(1\pm1)\int\frac{1}{z^{n+2}}\cdot\int\frac{1}{z^{m-2}}\mp\frac{n(n+1)}{1\cdot2}\int\frac{1}{z^{n+2}}\Bigl(\frac{1}{y^{m-2}}\Bigr)}\\
&\mathit{\ \ -\frac{n(n+1)(n+2)}{1\cdot2\cdot3}(1\mp1)\int\frac{1}{z^{n+3}}\cdot\int\frac{1}{z^{m-3}}\mp\frac{n(n+1)(n+2)}{1\cdot2\cdot3}\int\frac{1}{z^{n+3}}\Bigl(\frac{1}{y^{m-3}}\Bigr)}
\mathit{\pm{\rm etc.} }
\end{align*}
In modern language, it is translated into the following:
\begin{align}
\label{secE}
\zeta(m)\zeta(n)-\zeta(m+n) &=\sum^{{m-1}}_{i=0}(-1)^{i}\binom{n+i-1}{i}\zeta(n+i)\zeta(m-i) \\
  & +(-1)^{m}\sum^{{n-1}}_{j=0}\binom{m+j-1}{j}\Bigl\{ \zeta(m+j)\zeta(n-j)-\zeta^{\star}(n-j, m+j)\Bigr\}\nonumber \\
  & +\sum^{{n-1}}_{i=0}(-1)^{i}\binom{m+i-1}{i}\zeta(m+i)\zeta(n-i)\nonumber \\
  & +(-1)^{n}\sum^{{m-1}}_{j=0}\binom{n+j-1}{j}\Bigl\{ \zeta(n+j)\zeta(m-j)-\zeta^{\star}(m-j, n+j)\Bigr\}\nonumber .
\end{align}
\subsection{Tertia methodus}
Euler proposed another method in \cite{Eu} pp.168--170. He introduced the following unconventional notations in \cite{Eu} pp.165--166:
  $p^{\mu}=p^{\nu}:=\zeta(\mu)\zeta(\nu),\quad 
  p^{\lambda}:=\zeta(\lambda),\quad 
  q^{\mu}:=\zeta^{\star}(\nu ,\mu)=\zeta^{\star}(\lambda-\mu ,\mu)$
with $\mu+\nu=\lambda$ and $\mu$, $\nu$ $\in\mathbb{Z}_{>0}$. By using these symbols, he rewrote the equations \eqref{pristar} and \eqref{key} respectively as follows (cf. \cite{Eu} pp.169--170): 
\begin{align*}
   &\mathit{q^m+q^n=p^m+p^{m+n}=p^n+p^{m+n}.}\\
   &\mathit{q^n-p^{m+n}=p^m-\frac{n}{1}p^{m-1}+\frac{n(n+1)}{1\cdot2}p^{m-2}-\frac{n(n+1)(n+2)}{1\cdot2\cdot3}p^{m-3}+{\rm etc}. }\\
                     &\mathit{\qquad \pm q^n\pm\frac{m}{1}q^{n-1}\pm\frac{m(m+1)}{1\cdot2}q^{n-2}\pm\frac{m(m+1)(m+2)}{1\cdot2\cdot3}q^{n-3}\pm {\rm etc}. }\\
  &\mathit{\qquad \mp p^{m+n} \mp \frac{m}{1}p^{m+n}\mp\frac{m(m+1)}{1\cdot2}p^{m+n}\mp\frac{m(m+1)(m+2)}{1\cdot2\cdot3}p^{m+n}\mp {\rm etc}. }
\end{align*}
By substituting the former equation $p^{\mu} = q^\mu+q^{m+n-\mu}-p^{m+n}$ into the latter equation, he obtained the following (cf. \cite{Eu} p.170)
\begin{align*} 
&\mathit{0=q^m-\frac{n}{1}(q^{m-1}+q^{n+1})+\frac{n(n+1)}{1\cdot2}(q^{m-2}+q^{n+2}) }\\
  &\mathit{\qquad  -\frac{n(n+1)(n+2)}{1\cdot2\cdot3}(q^{m-3}+q^{n+3})+{\rm etc}. }\\
  &\mathit{\qquad +\frac{n}{1}p^{m+n}-\frac{n(n+1)}{1\cdot2}p^{m+n}+\frac{n(n+1)(n+2)}{1\cdot2\cdot3}p^{m+n}-{\rm etc}. }\\
  &\mathit{\qquad \pm q^n\pm\frac{m}{1}q^{n-1}\pm\frac{m(m+1)}{1\cdot2}q^{n-2}\pm\frac{m(m+1)(m+2)}{1\cdot2\cdot3}q^{n-3}\pm {\rm etc}. }\\
  &\mathit{\qquad \mp p^{m+n} \mp \frac{m}{1}p^{m+n}\mp\frac{m(m+1)}{1\cdot2}p^{m+n}\mp\frac{m(m+1)(m+2)}{1\cdot2\cdot3}p^{m+n}\mp {\rm etc}. }
\end{align*}
In modern language, this means that, by \eqref{pristar}, he transformed \eqref{key} into the following:
\begin{align}
\label{terE}
\zeta(m, n)&=\sum^{m-1}_{i=0}(-1)^i\binom{n+i-1}{i}\Bigl\{\zeta(n+i, m-i)+\zeta(m-i, n+i)+\zeta(m+n)\Bigr\}\\
               &\quad +(-1)^m\sum^{n-1}_{j=0}\binom{m+j-1}{j}\zeta(m+j,n-j).\nonumber
\end{align}
This is the {\bf formula of tertia methodus}. 
\begin{rem}
We warn that formulae \eqref{key}, \eqref{secE} and \eqref{terE} contain meaningless values $\zeta(1)$ and $\zeta(m+n-1, 1)$. However, they will be correctly reformulated in Theorem \ref{thma} \eqref{thm7}, Theorem \ref{thmb} \eqref{eqsec} and 
\eqref{eqter}.
\end{rem}
\begin{rem}
The sum formula of Granville \cite{Gr} and Zagier (unpublished) in the case of double zeta values is recovered as a special case of \eqref{terE} for $m=1$.  
\end{rem}
\section{Main result}
In this section we give a rigorous reformulation of Euler's problematic formulae and its complete proof in Theorem \ref{thma} and \ref{thmb} by using the generating functions of double zeta values introduced by Gangl, Kaneko, Zagier in \cite{GKZ}.

%
%


\begin{prop}
\label{propa}
Double zeta values enjoy the {\bf double shuffle relations}. Namely, 
the {\bf shuffle product formula}  
\begin{align} 
  \label{sh}
  &\zeta(m)\zeta(n)=\sum^{n-1}_{k=0}\binom{m+k-1}{k}\zeta(n-k, m+k)+\sum^{m-1}_{l=0}\binom{n+l-1}{l}\zeta(m-l, n+l),
\end{align}  
and the {\bf harmonic product formula}
\begin{align}  
  \label{ha}
  &\zeta(m)\zeta(n)=\zeta(m,n)+\zeta(n,m)+\zeta(m+n)
\end{align}
hold for $m, n \in \mathbb{Z}_{>1}$.
\end{prop}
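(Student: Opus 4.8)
The plan is to prove the two identities by the two standard realizations of double zeta values: the series (harmonic) side for \eqref{ha} and the iterated-integral (shuffle) side for \eqref{sh}. The harmonic product is the elementary one. I would multiply the two Dirichlet series, writing $\zeta(m)\zeta(n)=\sum_{a,b\geq 1}a^{-m}b^{-n}$, and decompose the index set $\{(a,b):a,b\geq1\}$ into the three disjoint regions $a<b$, $a>b$ and $a=b$. The first region contributes $\sum_{0<a<b}a^{-m}b^{-n}=\zeta(m,n)$, the second contributes $\zeta(n,m)$, and the diagonal contributes $\sum_{a\geq1}a^{-(m+n)}=\zeta(m+n)$. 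Absolute convergence for $m,n\geq2$ justifies the rearrangement, and \eqref{ha} follows at once; this is exactly Euler's \emph{prima methodus} identity \eqref{pri}.

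For the shuffle product \eqref{sh} I would pass to the iterated-integral representation. With the one-forms $x=\frac{dt}{t}$ and $y=\frac{dt}{1-t}$, integration over the simplex $0<t_1<\cdots<t_w<1$ (reading the word from the variable nearest $0$ to the variable nearest $1$) gives $\zeta(s)=\int_0^1 yx^{s-1}$ and, more generally, $\zeta(a,b)=\int_0^1 yx^{a-1}yx^{b-1}$; these converge precisely because the trailing letter is $x$, i.e. because the last argument exceeds $1$. The key input is that a product of two such integrals over the same simplex decomposes, by subdividing the product of two simplices into its open chambers, as the integral of the shuffle of the two words. Hence $\zeta(m)\zeta(n)$ equals the integral of the shuffle of $yx^{m-1}$ and $yx^{n-1}$, and every word that occurs has the form $yx^{a-1}yx^{b-1}$ with $a+b=m+n$. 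Since both factors begin with $y$ and (using $m,n\geq2$) end with $x$, every shuffle word again begins with $y$ and ends with $x$, so all terms are admissible and no divergent symbol appears, even in the degenerate case $\zeta(1,m+n-1)$.

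The combinatorial heart is counting multiplicities, which I would organize by which of the two leading $y$'s comes first in a given shuffle word. If the $y$ of $yx^{m-1}$ precedes the $y$ of $yx^{n-1}$, then after placing $i$ of the first word's $x$'s before the second $y$ (with $0\leq i\leq m-1$) the remaining $m-1-i$ and $n-1$ copies of $x$ interleave freely, producing $\binom{m+n-2-i}{n-1}$ words each equal to $yx^{i}yx^{m+n-2-i}=\zeta(i+1,\,m+n-1-i)$. Setting $l=m-1-i$ and using $\binom{n+l-1}{n-1}=\binom{n+l-1}{l}$ reproduces the second sum $\sum_{l=0}^{m-1}\binom{n+l-1}{l}\zeta(m-l,n+l)$. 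The mirror case, where the $y$ of $yx^{n-1}$ comes first, gives by the identical count $\binom{m+n-2-j}{m-1}$ words equal to $\zeta(j+1,\,m+n-1-j)$, which after $k=n-1-j$ becomes the first sum $\sum_{k=0}^{n-1}\binom{m+k-1}{k}\zeta(n-k,m+k)$. Adding the two cases yields \eqref{sh}.

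I expect the main obstacle to be purely the bookkeeping: matching the chamber-counting binomials $\binom{m+n-2-i}{n-1}$ to the asymmetric coefficients $\binom{n+l-1}{l}$ and $\binom{m+k-1}{k}$ as printed in \eqref{sh}, and keeping the two reindexings straight so that the boundary terms $\zeta(1,m+n-1)$ are assigned to the correct sum. By contrast, the analytic justification of the shuffle decomposition itself is standard and causes no difficulty here, since for $m,n\geq2$ all the integrands are absolutely integrable on the open simplex.
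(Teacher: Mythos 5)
Your proposal is correct, but note that the paper itself offers no proof of Proposition \ref{propa}: it is quoted as a standard fact, with the harmonic product \eqref{ha} already obtained in Section 1 as Euler's \emph{prima methodus} (exactly your series decomposition of $\sum_{a,b\ge 1}a^{-m}b^{-n}$ into the regions $a<b$, $a>b$, $a=b$), and the shuffle product implicitly delegated to \cite{IKZ} and \cite{GKZ} --- indeed the paper never uses \eqref{sh} directly, but rather imports its regularized avatar in the packaged generating-function form \eqref{gkz}. So your argument for \eqref{ha} coincides with the paper's, while your iterated-integral proof of \eqref{sh} supplies a derivation the paper omits. I checked the combinatorics: with the convention $\zeta(a,b)=\int_0^1 yx^{a-1}yx^{b-1}$ matching the paper's $\zeta(a,b)=\sum_{k_1<k_2}k_1^{-a}k_2^{-b}$, your case split on which leading $y$ occurs first gives multiplicity $\binom{m+n-2-i}{n-1}=\binom{n+l-1}{l}$ for the word $yx^iyx^{m+n-2-i}$ with $l=m-1-i$, which is exactly the second sum in \eqref{sh}, and the mirror case gives the first sum; the totals are consistent since $\binom{m+n-1}{m-1}+\binom{m+n-1}{n-1}=\binom{m+n}{m}$ counts all shuffles. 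Your observation that for $m,n\ge 2$ every shuffle word ends in $x$, so that no divergent $\zeta(\cdot,1)$ can occur (in case 1 one has $b=m+n-a\ge n\ge 2$), is the right convergence check. The only caveat worth recording is that the word $yyx^{m+n-2}$ arises once from each case, so $\zeta(1,m+n-1)$ appears with total coefficient $2\binom{m+n-2}{n-1}$; this is as it should be, since shuffles are counted as interleavings of the two ordered words rather than as distinct resulting words.
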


We recall two notions of regularization of double zeta values along the line of \cite{IKZ}\footnote{Another reformulation was also given in \cite{R}.}.
Let $m,n\in\mathbb{Z}_{>0}$. It is shown that when $N\rightarrow\infty$, we have 
\begin{align*}
    	\sum_{0<k<N}\frac{1}{k^m}\sim a_0+a_1(\log N+\gamma),
      \sum_{0<k_1<k_2<N}\frac{1}{{k_1}^m{k_2}^n}\sim b_0+b_1(\log N + \gamma)+b_2(\log N + \gamma)^2,
\end{align*}
with some $a_i, b_i \in \mathbb{R}$ and the Euler's constant $\gamma:=\lim_{n\rightarrow\infty}\big(1+\frac{1}{2}+\cdots+\frac{1}{n}-\log n\big)$. Here $f(x)\sim g(x)\ (x\rightarrow \alpha)$ means $\frac{f(x)}{g(x)}\rightarrow 1\ (x\rightarrow \alpha)$ (this $\alpha$ can be infinity). The {\bf harmonic regularized values} $\zeta_{\mbox{*}}(m)$ and $\zeta_{\mbox{*}}(m,n)$ are defined as follows:
           \[
             \zeta_{\mbox{*}}(m):=a_0+a_1\cdot T, \ \zeta_{\mbox{*}}(m,n):=b_0+b_1\cdot T+b_2\cdot T^2\in\mathbb{R}[T].
	     \]   

In contrast, for $m, n\in\mathbb{Z}_{>0}$, it is also shown that when $\epsilon\rightarrow 0$, we have
\begin{align*}
  	  \sum_{0<k}\frac{ (1-\epsilon)^{k} }{ {k}^m}\sim c_0+c_1(-\log\epsilon),
	   \sum_{0<k_1<k_2}\frac{ (1-\epsilon)^{k_2} }{ {k_1}^m{k_2}^n}\sim d_0+d_1(-\log\epsilon)+d_2(-\log\epsilon)^2,
\end{align*}
with some $c_i, d_i\in\mathbb{R}$. For $m,n\in\mathbb{Z}_{>0}$, the {\bf shuffle regularized values} $\zeta_{\scalebox{0.5}{$\Sha$}}(m)$ and $\zeta_{\scalebox{0.5}{$\Sha$}}(m,n)$ are defined as follows:
	\[
  	  \zeta_{\scalebox{0.5}{$\Sha$}}(m):=c_0+c_1\cdot T, \  \zeta_{\scalebox{0.5}{$\Sha$}}(m,n):=d_0+d_1\cdot T+d_2\cdot T^2\in\mathbb{R}[T]. 
	\]
We remark that
\begin{equation}
\label{remreg}
\zeta_{\mbox{*}}(m)
        =\begin{cases} 
      \zeta_{\scalebox{0.5}{$\Sha$}}(m)=T & \text{if $m=1$},\\
      \zeta_{\scalebox{0.5}{$\Sha$}}(m)=\zeta(m) & \text{if $m>1$},
          \end{cases}
\end{equation}
and $\zeta_{\mbox{*}}(m,n)=\zeta_{\scalebox{0.5}{$\Sha$}}(m,n)=\zeta(m,n)$ if $n>1$.
\begin{prop}
The {\bf extended double shuffle relations} hold for the regularized values. Namely, for $m,n\in\mathbb{Z}_{>0}$, we have
\begin{align} 
  \label{regsh}&\zeta_{\scalebox{0.5}{$\Sha$}}(m)\zeta_{\scalebox{0.5}{$\Sha$}}(n)=\sum^{m-1}_{i=0}\binom{n+i-1}{i}\zeta_{\scalebox{0.5}{$\Sha$}}(m-i, n+i)+\sum^{n-1}_{j=0}\binom{m+j-1}{j}\zeta_{\scalebox{0.5}{$\Sha$}}(n-j, m+j),
\end{align}  
\begin{align}  
  \label{regha}
  &\zeta_{\mbox{{\rm *}}}(m)\zeta_{\mbox{{\rm *}}}(n)=\zeta_{\mbox{{\rm *}}}(m,n)+\zeta_{\mbox{{\rm *}}}(n,m)+\zeta_{\mbox{\rm *}}(m+n),
\end{align}
\begin{align}
\label{edsr}
 &\sum^{m-1}_{i=0}\binom{n+i-1}{i}\zeta_{\scalebox{0.5}{$\Sha$}}(m-i,n+i)+\sum^{n-1}_{j=0}\binom{m+j-1}{j}\zeta_{\scalebox{0.5}{$\Sha$}}(n-j,m+j)\\
 &\quad =\zeta_{\mbox{\rm *}}(m,n)+\zeta_{\mbox{\rm *}}(n,m)+\zeta_{\mbox{\rm *}}(m+n).\nonumber
\end{align}
\end{prop}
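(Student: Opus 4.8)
The plan is to prove the two product formulae \eqref{regsh} and \eqref{regha} independently, each by transporting a genuine analytic product identity down to the level of regularized values, and then to deduce \eqref{edsr} as a purely formal consequence of the two together with \eqref{remreg}. In both cases the mechanism is the same: a product identity that holds exactly for a truncated or deformed sum, followed by extraction of the polynomial part of the asymptotic expansion in $T$.

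For the harmonic relation \eqref{regha} I would start from the exact identity for truncated sums, valid for every $N$,
\[
\Bigl(\sum_{0<k<N}\tfrac{1}{k^m}\Bigr)\Bigl(\sum_{0<k<N}\tfrac{1}{k^n}\Bigr)
=\sum_{0<k_1<k_2<N}\tfrac{1}{k_1^m k_2^n}+\sum_{0<k_1<k_2<N}\tfrac{1}{k_1^n k_2^m}+\sum_{0<k<N}\tfrac{1}{k^{m+n}},
\]
obtained by splitting the double index range into $k_1<k_2$, $k_1>k_2$ and $k_1=k_2$. Each factor and each summand has an asymptotic expansion that is a polynomial in $T=\log N+\gamma$ modulo a term tending to $0$, and the regularized value is by definition that polynomial part. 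The key step is to observe that extracting the polynomial part is multiplicative: if $f(N)=P(T)+o(1)$ and $g(N)=Q(T)+o(1)$ with $P,Q\in\mathbb{R}[T]$ and with error terms decaying like a power of $N^{-1}$ against powers of $\log N$, then $f(N)g(N)=P(T)Q(T)+o(1)$, since any fixed power of $\log N$ against an $O(N^{-s})$ error still vanishes. Matching polynomial parts on both sides then yields \eqref{regha}.

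For the shuffle relation \eqref{regsh} I would replace truncation by the deformation $z=1-\epsilon$ and work with the polylogarithms $\mathrm{Li}_m(z)=\sum_{0<k}\tfrac{z^k}{k^m}$ and their depth-two analogues $L_{a,b}(z)=\sum_{0<k_1<k_2}\tfrac{z^{k_2}}{k_1^a k_2^b}$. Writing each as an iterated integral from $0$ to $z$ of words in $\tfrac{dt}{t}$ and $\tfrac{dt}{1-t}$, the shuffle product of iterated integrals gives, for every $z\in(0,1)$, the identity
\[
\mathrm{Li}_m(z)\mathrm{Li}_n(z)=\sum_{i=0}^{m-1}\binom{n+i-1}{i}L_{m-i,n+i}(z)+\sum_{j=0}^{n-1}\binom{m+j-1}{j}L_{n-j,m+j}(z),
\]
the binomial coefficients counting interleavings of the two blocks of $\tfrac{dt}{t}$'s. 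Letting $\epsilon\to0$ and extracting the polynomial part in $T=-\log\epsilon$, again using multiplicativity of the extraction, produces \eqref{regsh}.

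Finally, \eqref{edsr} needs no further analysis. By \eqref{remreg} the depth-one regularizations coincide, $\zeta_{*}(m)=\zeta_{\scalebox{0.5}{$\Sha$}}(m)$ in $\mathbb{R}[T]$ for every $m\geq1$, so the two products are literally the same polynomial,
\[
\zeta_{\scalebox{0.5}{$\Sha$}}(m)\zeta_{\scalebox{0.5}{$\Sha$}}(n)=\zeta_{*}(m)\zeta_{*}(n);
\]
replacing the left-hand side by \eqref{regsh} and the right-hand side by \eqref{regha} identifies the two sums of double values, which is \eqref{edsr}. I expect the only genuine obstacle to be the justification that ``take the polynomial part of the asymptotic expansion'' is a ring homomorphism on each of the two families of sums; once the error terms are controlled (polynomial decay in $N$, respectively $\epsilon$, against powers of the logarithm) the remainder is merely the transfer of the classical harmonic and shuffle identities, with the $m,n>1$ cases already contained in Proposition \ref{propa}.
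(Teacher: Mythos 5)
Your proof is correct, but note that the paper does not actually prove this proposition: it is stated as a recalled fact from the regularization theory of \cite{IKZ}, so there is no internal argument to compare against. Your argument is essentially the standard one underlying that reference, and it is sound as written. The two points that carry the real weight are exactly the ones you isolate: (i) each product formula must be paired with the regularization it is compatible with --- the harmonic decomposition of the index range $\{k_1<k_2\}\cup\{k_1>k_2\}\cup\{k_1=k_2\}$ is an exact identity only for the truncated sums (it fails for the deformed sums because the product of single sums produces $z^{k+l}$ while the deformed double sum carries $z^{\max(k,l)}$), whereas the shuffle identity is exact only for the polylogarithm/iterated-integral deformation; and (ii) the map ``asymptotic expansion $\mapsto$ polynomial part in $T$'' is multiplicative on each family, which holds because the error terms decay like a power of $N^{-1}$ (resp.\ of $\epsilon$) against powers of the logarithm, so that a polynomial in $T$ times an $o(1)$ error is still $o(1)$. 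Your deduction of \eqref{edsr} from \eqref{regsh}, \eqref{regha} and the depth-one coincidence \eqref{remreg} is also correct, and it is worth emphasizing that this shortcut is special to the present depth-two situation: in general the comparison of the two regularizations requires the correction map $\rho$ of \cite{IKZ}, but here only depth-one regularized values occur on the left-hand sides of \eqref{regsh} and \eqref{regha}, where $\zeta_{\mbox{\rm *}}$ and $\zeta_{\scalebox{0.5}{$\Sha$}}$ literally agree.
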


\label{pr}
We recall generating functions of double zeta values and Riemann zeta values which were introduced in \cite{GKZ}. 
For $k\in\mathbb{Z}_{>1}$, we put
\[
D_{k}(X, Y):=\sum_{i=1}^{k-1}\zeta_{\scalebox{0.5}{$\Sha$}}(k-i, i)X^{i-1}Y^{k-i-1}, \
Q_{k}(X,Y):=\sum_{i=1}^{k-1}\zeta_{\scalebox{0.5}{$\Sha$}}(i)\zeta_{\scalebox{0.5}{$\Sha$}}(k-i)X^{i-1}Y^{k-i-1}.
\]
They showed in \cite{GKZ} p.80 (25) the following relation 
\begin{align}
\label{gkz}
D_{k}(X+Y, Y)+D_{k}(X+Y, X)=Q_{k}(X,Y), 
\end{align}
which is a reformulation of shuffle product formula \eqref{regsh}. 

The following is a reformulation of Euler's problematic key formula \eqref{key}. 
%
\begin{thm}
\label{thma}
For $m,n\in \mathbb{Z}_{>1}$,    
\begin{align}
\label{thm7} 
\zeta(m,n)=P(m,n),
\end{align}
where
  \begin{align}
   \label{P(m,n)}
    P(m,n):=&\sum^{m-1}_{i=0}(-1)^{i}\binom{n+i-1}{i}\zeta(n+i)\zeta_{\scalebox{0.5}{$\Sha$}}(m-i)\\
               &\ +(-1)^{m}\sum^{n-1}_{j=0}\binom{m+j-1}{j}\Bigl\{\zeta(m+j)\zeta_{\scalebox{0.5}{$\Sha$}}(n-j)-\zeta^{\star}(n-j,m+j)\Bigr\}\nonumber.
  \end{align}                 
\end{thm}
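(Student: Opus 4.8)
The plan is to make Euler's derivation of \eqref{key} rigorous by reinstating his computation with a convergence factor that is exactly the shuffle regularization recalled above, and then letting the regularization degenerate. Fix $m,n\in\mathbb{Z}_{>1}$. For $t=1-\epsilon\in(0,1)$ I would attach the weight $t^{x+a}=t^{k_2}$ to the outer index of Euler's summand, replacing $s_a$ by
\[
s_a(t):=\sum_{x\geq 1}\frac{t^{x+a}}{x^{n}(x+a)^{m}},
\]
so that its limit as $t\to1^-$ is the convergent value $\sum_{a}s_a=\zeta(m,n)$ recorded in \eqref{key}, and the weight is precisely the $(1-\epsilon)^{k_2}$ prescription defining $\zeta_{\scalebox{0.5}{$\Sha$}}$. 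The crucial gain is that for $t<1$ every series below converges absolutely; hence the term-by-term application of the partial fraction \eqref{fra} and, above all, the interchange of summations flagged by $\overset{!}{=}$ in the Remark are now legitimate. This single device cures both defects of Euler's argument at once: the unjustified rearrangement and the occurrence of the divergent symbol $\zeta(1)$.

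First I would substitute \eqref{fra} into $s_a(t)$ and sum over $x$, then over $a$. Since $t^{x+a}=t^{x}t^{a}$, the first group of terms factors into products of polylogarithms,
\[
\sum_{i=0}^{m-1}(-1)^{i}\binom{n+i-1}{i}\Bigl(\sum_{a\geq1}\tfrac{t^{a}}{a^{n+i}}\Bigr)\Bigl(\sum_{x\geq1}\tfrac{t^{x}}{x^{m-i}}\Bigr),
\]
while the second group, in which the index $a$ carries no weight, reassembles (with the coefficients $(-1)^{m}\binom{m+j-1}{j}$ of \eqref{key}) into the shuffle-regularization double sums $\sum_{0<a<k}t^{k}a^{-(m+j)}k^{-(n-j)}$ for $0\leq j\leq n-1$. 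For every $t\in(0,1)$ this is an honest identity; it is the rigorous incarnation of the formal chain in \eqref{key}.

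Next I would let $t\to1^-$. The left-hand side converges to $\zeta(m,n)$ because the limiting series is absolutely convergent (its outer weight exceeds $1$). On the right, each polylogarithm tends to its regularized value by definition: $\sum_{a}t^{a}a^{-(n+i)}\to\zeta(n+i)$ and $\sum_{x}t^{x}x^{-(m-i)}\to\zeta_{\scalebox{0.5}{$\Sha$}}(m-i)$, the value $T$ arising exactly when $m-i=1$ from $\mathrm{Li}_1(t)=-\log\epsilon$; so the first group contributes the first sum of $P(m,n)$. For the second group I would evaluate each double sum directly, writing $\sum_{0<a<k}t^{k}a^{-(m+j)}k^{-(n-j)}=\sum_{a}a^{-(m+j)}\bigl(\mathrm{Li}_{n-j}(t)-\sum_{k\leq a}t^{k}k^{-(n-j)}\bigr)$; letting $t\to1$ yields precisely $\zeta(m+j)\zeta_{\scalebox{0.5}{$\Sha$}}(n-j)-\zeta^{\star}(n-j,m+j)$, the $j$-th summand of the second sum of $P(m,n)$. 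Assembling the two groups gives $\zeta(m,n)=P(m,n)$ as an identity in $\mathbb{R}[T]$.

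The step requiring genuine care — and the residue of Euler's original carelessness — is the bookkeeping of the logarithmic divergences: each group separately blows up like $-\log\epsilon$, and only their sum has a finite limit. Concretely, in the terminal term $j=n-1$ (where $n-j=1$) one must extract $\mathrm{Li}_1(t)=-\log\epsilon$ before passing to the limit, and the surviving coefficient of $T$ equals $\zeta(m+n-1)\bigl[(-1)^{m-1}\binom{n+m-2}{m-1}+(-1)^{m}\binom{m+n-2}{n-1}\bigr]$, which vanishes by the symmetry $\binom{m+n-2}{m-1}=\binom{m+n-2}{n-1}$. Thus $P(m,n)$ is in fact constant and equals $\zeta(m,n)$, as required. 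I expect this cancellation, together with the direct evaluation of the terminal shuffle sum, to be the only nontrivial points; the identification of each second-group sum with $\zeta(m+j)\zeta_{\scalebox{0.5}{$\Sha$}}(n-j)-\zeta^{\star}(n-j,m+j)$ is exactly the regularized harmonic product \eqref{regha}, which is how the reformulated formula is seen to flow from the extended double shuffle relations.
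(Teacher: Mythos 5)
Your argument is correct in substance, but it is a genuinely different proof from the one in the paper. The paper's proof is a two-line coefficient extraction: substituting $X\mapsto X-Y$ in the Gangl--Kaneko--Zagier identity \eqref{gkz} gives $D_{m+n}(X,Y)+D_{m+n}(X,X-Y)=Q_{m+n}(X-Y,Y)$, and reading off the coefficient of $X^{n-1}Y^{m-1}$ on each side yields $\zeta(m,n)$, $-(-1)^m\sum_j\binom{m+j-1}{j}\{\cdots\}$ and $\sum_i(-1)^i\binom{n+i-1}{i}\zeta(n+i)\zeta_{\scalebox{0.5}{$\Sha$}}(m-i)$ respectively; so the theorem is exhibited as a disguised instance of the regularized shuffle relation \eqref{regsh}. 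You instead rehabilitate Euler's secunda methodus directly by inserting the Abel factor $t^{k_2}=(1-\epsilon)^{k_2}$, which is exactly the weight defining the shuffle regularization: for $t<1$ all series converge absolutely, so the interchange flagged as $\overset{!}{=}$ becomes legitimate, and the limit $t\to1^-$ produces the regularized values, with the two $-\log\epsilon$ divergences cancelling via $\binom{m+n-2}{m-1}=\binom{m+n-2}{n-1}$ --- which is also the correct verification that $P(m,n)$ has vanishing $T$-coefficient. Your route is longer but self-contained (partial fractions, absolute convergence, elementary Abel-limit estimates such as $\mathrm{Li}_s(1-\epsilon)=\zeta(s)+O(\epsilon\log\tfrac1\epsilon)$ for $s\geq2$, which you should state when multiplying against the divergent $\mathrm{Li}_1$), and it shows the theorem really is Euler's own argument made rigorous; the paper's route is shorter and makes the link to the extended double shuffle relations transparent. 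One bookkeeping caveat: with the paper's convention $\zeta(m,n)=\sum_{k_1<k_2}k_1^{-m}k_2^{-n}$, the identity \eqref{fra} as displayed is the partial-fraction decomposition of $x^{-m}(x+a)^{-n}$, not of $x^{-n}(x+a)^{-m}$ (check $m=2$, $n=1$), so your $s_a(t)$ must be $\sum_x t^{x+a}x^{-m}(x+a)^{-n}$ for the left-hand side to come out as $\zeta(m,n)$ rather than $\zeta(n,m)$; as written you have copied the swapped exponents from the paper's transcription of Euler, and this should be fixed for the indices in $P(m,n)$ to match.
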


\begin{proof}
By changing a variable $X$ to $X-Y$ and putting $k=m+n$ in \eqref{gkz}, we get
\begin{align}
  D_{m+n}(X, Y)+D_{m+n}(X, X-Y)=Q_{m+n}(X-Y, Y). \label{gkza}
\end{align}
We note that
\[
\sum^{m-1}_{i=0}(-1)^{i}\binom{n+i-1}{i}\zeta(n+i)\zeta_{\scalebox{0.5}{$\Sha$}}(m-i)
\] 
is the coefficient of $X^{n-1}Y^{m-1}$ in $Q_{m+n}(X-Y, Y)$, while
\[
(-1)^{m}\sum^{n-1}_{j=0}\binom{m+j-1}{j}\Bigl\{\zeta(m+j)\zeta_{\scalebox{0.5}{$\Sha$}}(n-j)-\zeta^{\star}(n-j,m+j)\Bigr\}
\] 
is the coefficient of $X^{n-1}Y^{m-1}$ in $-D_{m+n}(X, X-Y)$ and $\zeta(m,n)$ is the coefficient of $X^{n-1}Y^{m-1}$ in $D_{m+n}(X,Y)$. Therefore it follows from \eqref{gkza} that $P(m,n)=\zeta(m,n)$.
\end{proof}

The following theorem gives a reformulation of Euler's secunda methodus \eqref{secE} and tertia methodus \eqref{terE}. 
\begin{thm}
\label{thmb}
For $m,n\in \mathbb{Z}_{>1}$, we have
  \begin{align}
    \label{eqsec} 
    &\zeta(m)\zeta(n)-\zeta(m+n)=P(m,n)+P(n,m),
  \end{align} 
and
  \begin{align} 
   \label{eqter}
    \zeta(m, n)=&\sum^{m-1}_{i=0}(-1)^{i}\binom{n+i-1}{i}\Bigl\{\zeta_{\mbox{{\rm *}}}(n+i, m-i)+\zeta(m-i, n+i)+\zeta(m+n)\Bigr\}\\ 
                    &+(-1)^{m}\sum^{n-1}_{j=0}\binom{m+j-1}{j}\zeta_{\mbox{{\rm *}}}(m+j, n-j),\nonumber
  \end{align}
\end{thm}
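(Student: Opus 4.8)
The plan is to prove both identities by leveraging Theorem \ref{thma}, which has already established $\zeta(m,n)=P(m,n)$, together with the regularization relations collected in the excerpt. The two statements require genuinely different treatments, so I would handle them separately.

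For the secunda methodus formula \eqref{eqsec}, the approach is almost immediate. Starting from the harmonic product formula \eqref{ha}, namely $\zeta(m)\zeta(n)=\zeta(m,n)+\zeta(n,m)+\zeta(m+n)$, I would simply substitute $\zeta(m,n)=P(m,n)$ and $\zeta(n,m)=P(n,m)$ from Theorem \ref{thma}. This yields
\begin{align*}
\zeta(m)\zeta(n)-\zeta(m+n)=P(m,n)+P(n,m)
\end{align*}
with no further work. The only point requiring care is that both $P(m,n)$ and $P(n,m)$ are well-defined under the hypothesis $m,n\in\mathbb{Z}_{>1}$, since every $\zeta$-argument appearing inside $P$ is at least $2$ except for the shuffle-regularized factors $\zeta_{\scalebox{0.5}{$\Sha$}}(m-i)$ and $\zeta_{\scalebox{0.5}{$\Sha$}}(n-j)$, which are legitimate elements of $\mathbb{R}[T]$ by \eqref{remreg}.

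For the tertia methodus formula \eqref{eqter}, I would begin again from Theorem \ref{thma} in the form $\zeta(m,n)=P(m,n)$ and transform the expression $P(m,n)$ into the right-hand side of \eqref{eqter}. The key is to rewrite the products $\zeta(n+i)\zeta_{\scalebox{0.5}{$\Sha$}}(m-i)$ and the quantity $\zeta(m+j)\zeta_{\scalebox{0.5}{$\Sha$}}(n-j)-\zeta^{\star}(n-j,m+j)$ appearing in $P(m,n)$ using the regularized harmonic product \eqref{regha}. Concretely, \eqref{regha} gives
\begin{align*}
\zeta_{\mbox{*}}(a)\zeta_{\mbox{*}}(b)=\zeta_{\mbox{*}}(a,b)+\zeta_{\mbox{*}}(b,a)+\zeta_{\mbox{*}}(a+b),
\end{align*}
which lets me convert each product of (regularized) single zeta values in $P(m,n)$ into a sum of double regularized values. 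Recalling from \eqref{remreg} that $\zeta_{\mbox{*}}$ and $\zeta_{\scalebox{0.5}{$\Sha$}}$ agree on single values of weight greater than one, and that $\zeta_{\mbox{*}}(a,b)=\zeta(a,b)$ whenever $b>1$, I would match the first sum of $P(m,n)$ against the bracketed term $\{\zeta_{\mbox{*}}(n+i,m-i)+\zeta(m-i,n+i)+\zeta(m+n)\}$ and the second sum against $\zeta_{\mbox{*}}(m+j,n-j)$, also using the definition $\zeta^{\star}(n-j,m+j)=\zeta(n-j,m+j)+\zeta(m+n)$ to absorb the star value.

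The main obstacle will be the careful bookkeeping of which regularization applies to each term, since the second arguments $m-i$ and $n-j$ can equal $1$ at the boundary of the summation ranges, exactly where $\zeta_{\mbox{*}}$ and honest convergent values diverge. Precisely when a lower index drops to weight one, the naive value $\zeta(1)$ is meaningless—this is the very defect in Euler's original derivation that the theorem repairs—so I must verify that replacing these boundary contributions by the $T$-dependent regularized values $\zeta_{\mbox{*}}(\cdot,1)$ produces the correct polynomial identity in $\mathbb{R}[T]$, and in particular that the $T$-linear and $T^2$ coefficients cancel so that the final equation holds as an identity of real numbers. Establishing this cancellation, by comparing the coefficients of $T$ on both sides, is where the delicate part of the argument lies.
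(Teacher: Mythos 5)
Your proposal is correct and follows essentially the same route as the paper: \eqref{eqsec} is obtained exactly as in the paper by substituting Theorem \ref{thma} into the harmonic product formula \eqref{ha}, and \eqref{eqter} by rewriting $P(m,n)$ term by term via the regularized harmonic product \eqref{regha} together with \eqref{remreg} and $\zeta^{\star}(a,b)=\zeta(a,b)+\zeta(a+b)$ (the paper's computation detours through $\zeta_{\scalebox{0.5}{$\Sha$}}(m+j,n-j)$ and the relation \eqref{edsr}, but the substance is the same). The only superfluous point is your closing concern about verifying the cancellation of the $T$- and $T^2$-coefficients: since each step of the chain is already an identity in $\mathbb{R}[T]$ and the left-hand side $\zeta(m,n)$ is $T$-free, no separate check is needed.
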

\begin{proof}
\leavevmode \\ {\it Secunda methodus}: By Theorem \ref{thma}, $P(m, n) = \zeta(m, n)$ for $m, n\in \mathbb{Z}_{>1}$. So by using the harmonic product formula \eqref{ha}, we obtain
\begin{align*}
P(m, n)+P(n, m)&=\zeta(m, n)+\zeta(n, m)
                 =\zeta(m)\zeta(n)-\zeta(m+n). 
\end{align*}
Hence \eqref{eqsec} is shown. \\
{\it Tertia methodus}: Again by Theorem \ref{thma}, the extended double shuffle relations, \eqref{regsh}, \eqref{regha} and \eqref{edsr}, yield 
\begin{align*}
	&\zeta(m, n)=P(m, n)\\
	&=\sum^{m-1}_{i=0}(-1)^{i}\binom{n+i-1}{i}\zeta(n+i)\zeta_{\scalebox{0.5}{$\Sha$}}(m-i)+(-1)^{m}\sum^{n-1}_{j=0}\binom{m+j-1}{j}\zeta_{\scalebox{0.5}{$\Sha$}}(m+j, n-j),\\
      &=\sum^{m-1}_{i=0}(-1)^{i}\binom{n+i-1}{i}\zeta(n+i)\zeta_{\scalebox{0.5}{$\Sha$}}(m-i)+(-1)^{m}\sum^{n-1}_{j=0}\binom{m+j-1}{j}\Bigl\{\zeta(m+n-1)T\\
      &\quad -\zeta(1, n+m-1)-\sum^{m+n-2}_{k=1}\zeta(m+n-1-k, 1+k)\Bigr\},\\	
\intertext{by using the equation \eqref{edsr},}      
&=\sum^{m-1}_{i=0}(-1)^{i}\binom{n+i-1}{i}\zeta(n+i)\zeta_{\scalebox{0.5}{$\Sha$}}(m-i)+(-1)^{m}\sum^{n-1}_{j=0}\binom{m+j-1}{j}\Bigl\{\zeta(m+n-1)T\\
      &\quad -\zeta(1,n+m-1)-\zeta(m+n)-\zeta_{\scalebox{0.5}{$\Sha$}}(1, n+m-1)
\\
      &\quad+\sum^0_{j=0}\binom{m+n-2+j}{j}\zeta_{\scalebox{0.5}{$\Sha$}}(1-j, m+n-1+j)\Bigr\}\\	
&=\sum^{m-1}_{i=0}(-1)^{i}\binom{n+i-1}{i}\zeta(n+i)\zeta_{\scalebox{0.5}{$\Sha$}}(m-i)+(-1)^{m}\sum^{n-1}_{j=0}\binom{m+j-1}{j}\zeta_{\mbox{*}}(m+j, n-j),\\
\intertext{by Remark \ref{remreg},}
	&=\sum^{m-1}_{i=0}(-1)^{i}\binom{n+i-1}{i}\zeta(n+i)\zeta_{\mbox{*}}(m-i)+(-1)^{m}\sum^{n-1}_{j=0}\binom{m+j-1}{j}\zeta_{\mbox{*}}(m+j, n-j)\\
	\intertext{and finally by the harmonic product formula \eqref{regha},}
	&=\sum^{m-1}_{i=0}(-1)^{i}\binom{n+i-1}{i}\Bigl\{\zeta_{\mbox{*}}(n+i, m-i)+\zeta(m-i, n+i)+\zeta(m+n)\Bigr\}\\ 
    &\quad+(-1)^{m}\sum^{n-1}_{j=0}\binom{m+j-1}{j}\zeta_{\mbox{*}}(m+j, n-j).
\end{align*}
Hence \eqref{eqter} is shown.
\end{proof}
\label{pr}

\section*{Acknowledgments}
The author is deeply grateful to Professor H. Furusho for guiding him towards this topic. This paper could not have been written without his continuous encouragements. He gratefully acknowledges the referee for indicating him a very clear proof with idea of using the generating functions $D_k(X,Y)$ and $Q_k(X,Y)$ in \cite{GKZ} which greatly improved this paper. He would also like to thank H. Bachmann for giving him some comments on this paper. 

\end{document}